\documentclass[a4paper,reqno]{amsart}

\usepackage{amssymb}
\usepackage{amsfonts}
\usepackage{mathtools}
\mathtoolsset{showonlyrefs=true}

\usepackage{hyperref}
\hypersetup{% option list for hyperref
 setpagesize=false,
 hypertexnames=false,
 hyperfootnotes=true,
 bookmarksnumbered=true,%
 bookmarksopen=true,%
 colorlinks=true,%
 linkcolor=blue,
 citecolor=blue,
}
\theoremstyle{plain}
  \newtheorem{theorem}{Theorem}[section]
  \newtheorem{lemma}[theorem]{Lemma}
  \newtheorem{proposition}[theorem]{Proposition}
  \newtheorem{corollary}[theorem]{Corollary}
\theoremstyle{definition}
  
  \newtheorem*{acknowledgement}{Acknowledgement}
  \newtheorem*{dataavailability}{Data availability}
  \newtheorem*{aideclaration}{Declaration of generative AI use}
  \newtheorem*{conflict}{Conflict of Interest}
\theoremstyle{remark}
  \newtheorem{remark}[theorem]{Remark}

\numberwithin{equation}{section}

\renewcommand{\l}{\left}
\renewcommand{\r}{\right}
\newcommand{\eps}{\varepsilon}
\newcommand{\R}{\mathbb{R}}

\newcommand{\cE}{\mathcal{E}}

\newcommand{\cV}{\mathcal{V}}
\newcommand{\Pe}{\widetilde P}
\newcommand{\1}{\mathbf{1}}

\def\norm#1{\left\Vert #1 \right\Vert}

\DeclareMathOperator{\re}{Re}
\DeclareMathOperator{\im}{Im}

\DeclareMathOperator{\diag}{diag}

\newcommand{\semicolon}{\mathrel{;}}
\title[Scattering for NLS system without mass resonance or radial symmetry]{Scattering for the quadratic NLS system in $\mathbb{R}^{5}$ without mass resonance or radial symmetry}
\author[T. Inui]{Takahisa Inui}
\address[T. Inui]{Department of Mathematics, Graduate School of Science, The University of Osaka, Toyonaka, Osaka, Japan 560-0043.}
\email{inui@math.sci.osaka-u.ac.jp}

\author[K. Nishimura]{Kuranosuke Nishimura}
\address[K. Nishimura]{ARISE Analytics Inc.,THE LINKPILLAR 1 NORTH 10F, 2-21-1 Takanawa, Minato-ku, Tokyo, Japan 108-8618.}
\email{1117614@alumni.tus.ac.jp}

\keywords{quadratic NLS system, scattering, mass resonance, virial identity}
\subjclass[2020]{35Q55, 35B40}
\date{\today}

\begin{document}

\begin{abstract}
We consider the quadratic NLS system
\begin{equation*}
 \begin{cases}
  i\partial_tu+\Delta u=v\overline u,\\
  i\partial_tv+\kappa\Delta v=u^2
 \end{cases}
 \qquad (t,x)\in\R\times\R^5,
\end{equation*}
where $\kappa>0$. If $\kappa=1/2$, which is called mass resonance condition, then scattering below the ground state is shown by \cite{Hamano2018}. Moreover, when $\kappa \neq 1/2$, scattering of radial solutions is proved in \cite{MR4360610}. In the present paper, we prove scattering below the ground state for $\kappa\neq1/2$ without radial symmetry. 
Our proof is based on the concentration compactness and rigidity method by Kenig--Merle~\cite{MR2257393}. 
For the rigidity argument, following Pausader~\cite{MR2779061}, we use a localized virial argument in the direction orthogonal to the momentum.
\end{abstract}

\maketitle

\tableofcontents

\section{Introduction}

\subsection{Motivation}

We consider the following quadratic nonlinear Schr\"{o}dinger system on $\mathbb{R}^5$: 
\begin{equation}
\label{NLS}
\tag{NLS}
 \begin{cases}
  i\partial_tu+\Delta u=v\overline u,\\
  i\partial_tv+\kappa\Delta v=u^2,\\
  (u,v)|_{t=0}=(u_0,v_0)\in H^1(\R^5)\times H^1(\R^5).
 \end{cases}
\end{equation}
where $\kappa>0$. 
See \cite{MR3082479} for the derivation of the equation. 
Under the scaling
\[
 (u,v)(t,x)\longmapsto
 \alpha^2(u,v)(\alpha^2t,\alpha x),
 \qquad \alpha>0,
\]
the equation \eqref{NLS} is invariant and the critical Sobolev regularity is $s_c=1/2$. Hence, \eqref{NLS} is $\dot{H}^{1/2}$-critical, i.e. $L^2$-supercritical and $\dot{H}^1$-subcritical. 

It is known that the equation \eqref{NLS} is locally well-posed in $H^1(\mathbb{R}^5) \times H^1(\mathbb{R}^5)$ (see \cite{MR3082479}). Moreover, the energy $E$, the mass $M$, and the momentum $P$ are conserved, where 
\begin{align*}
	M(u,v)&\coloneqq \norm{u}_{L^2}^2+\norm{v}_{L^2}^2, \\
	E(u,v)&\coloneqq \norm{\nabla u}_{L^2}^2+\frac{\kappa}{2}\norm{\nabla v}_{L^2}^2
	+\re\int_{\R^5}\overline v u^2\,dx, \\
	P(u,v)&\coloneqq \im\int_{\R^5}
	\left(\overline u\nabla u+\frac12\overline v\nabla v\right)dx.
\end{align*}
Our interest is the global behavior of the solutions to \eqref{NLS}. From the viewpoint of the global dynamics, the ground state standing wave plays a threshold. The standing wave is of the form
\[
 (u,v)(t,x)=\left(e^{it}\phi(x),e^{2it}\psi(x)\right),
\]
where $(\phi,\psi)$ is a real-valued solution to
\begin{equation}
\label{stationary}
 \begin{cases}
  -\phi+\Delta\phi=\phi\psi,\\
  -2\psi+\kappa\Delta\psi=\phi^2
 \end{cases}
 \qquad x\in\R^5.
\end{equation}
A nontrivial least-action solution of \eqref{stationary} is called a ground state. The existence of the ground state was also proved in \cite{MR3082479}. The ground state of \eqref{stationary} is characterized by
\begin{equation*}
	S_{1}(\phi,\psi) = \inf \{S_{1}(f,g) \semicolon (f,g) \in H^1(\mathbb{R}^5)\times H^1(\mathbb{R}^5) \setminus\{(0,0)\}, K(f,g)=0\},
\end{equation*}
where $S_{1}=\frac{1}{2}E+\frac{1}{2}M$ and $K$ is the virial functional which is given by
\begin{equation*}
	K(f,g) \coloneqq \|\nabla f\|_{L^2}^2 + \frac{\kappa}{2} \|\nabla g\|_{L^2}^2 + \frac{5}{4} \re \int_{\mathbb{R}^5} \overline{g}f^2 dx. 
\end{equation*}

The condition $\kappa=1/2$ is called mass resonance condition. In this case, \eqref{NLS} is Galilean invariant. That is, $(e^{i\xi\cdot x}e^{-it|\xi|^2}u(t,x-2t\xi), e^{2i\xi\cdot x}e^{-2it|\xi|^2}v(t,x-2t\xi))$ is the solution to \eqref{NLS} when $(u,v)$ is the solution. On the other hand, \eqref{NLS} is not Galilean invariant when $\kappa\neq 1/2$. This is the main obstacle when we try to show the scattering. 
In the present paper, we study the non-mass-resonant case $\kappa\neq1/2$ and prove the scattering result below the ground state without assuming radial symmetry.

We here focus on the previous results of \eqref{NLS} for scattering. In the mass resonant case, Hamano~\cite{Hamano2018} proved
scattering in the positive potential well below the ground-state threshold. 
For arbitrary $\kappa\neq1/2$, Hamano and the authors \cite{MR4360610} obtained the corresponding
result under radially symmetric assumption. In the paper \cite{MR4360610}, the concentration compactness part has been already constructed for non-radial solutions. Zero momentum assumption, which is verified under the radial assumption, is used to show the rigidity argument. 
Meng--Xu~\cite{MR4191343} shows the same scattering result as in~\cite{Hamano2018} when $\kappa =1/2$ by using the method of Dodson--Murphy \cite{MR3692001}. 
Wang--Yang~\cite{MR4043361} shows the scattering for nonradial solutions by the method of Dodson--Murphy \cite{MR3692001} when $\kappa$ satisfies $|\kappa - 1/2|< \varepsilon$, where $\varepsilon$ depends on $E(u_0,v_0)$, which inculeds $\kappa$, and $M(u_0,v_0)$. However, since $\varepsilon$ depends on the data and also $\kappa$, the condition is implicit and thus the scattering result below the ground state for $\kappa\neq 1/2$ is not clear.

We refer to \cite{MR4026982,MR4090442,MR4781063,ACM25,NguDin26} and references therein for studies, including blow-up results, in the $L^2$-critical case and $\dot{H}^1$-critical case.

\subsection{Main result and outline of the proof}

The main theorem of the present paper is the following. 

\begin{theorem}
\label{thm:main}
Let $\kappa>0$. Let $(\phi,\psi)$ be a ground state of~\eqref{stationary}. Suppose that $(u_0,v_0)\in H^1(\R^5)\times H^1(\R^5)$ satisfies
\begin{equation}
\label{main-assumption}
 E(u_0,v_0)M(u_0,v_0)<E(\phi,\psi)M(\phi,\psi),
 \qquad K(u_0,v_0)\geq0.
\end{equation}
Then the solution is global and scatters in both time directions, that is, there exist $(u_\pm,v_\pm)\in H^1(\R^5)\times H^1(\R^5)$ such that
\[
 \norm{(u(t),v(t))-
 \l(e^{it\Delta}u_\pm,e^{i\kappa t\Delta}v_\pm\r)}_{H^1\times H^1}
 \longrightarrow0
 \qquad (t\to\pm\infty).
\]
\end{theorem}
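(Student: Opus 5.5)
We follow the Kenig--Merle concentration compactness and rigidity scheme. The variational and concentration-compactness parts are taken essentially from \cite{MR4360610}, where they were carried out without radial symmetry.

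\textbf{Step 1: coercivity and global existence.} Using the sharp Gagliardo--Nirenberg type inequality attained by the ground state $(\phi,\psi)$, we show that the region defined by \eqref{main-assumption} is invariant under the flow. On this region we obtain a uniform bound $\|\nabla u\|_{L^2}^2+\frac{\kappa}{2}\|\nabla v\|_{L^2}^2 \lesssim E(u_0,v_0)$, which gives global existence. If $E(u_0,v_0)M(u_0,v_0)\le(1-\delta)E(\phi,\psi)M(\phi,\psi)$, we also get the quantitative coercivity $K(u,v)\ge c_\delta\,(\|\nabla u\|_{L^2}^2+\|\nabla v\|_{L^2}^2)$ for all $t$.

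\textbf{Step 2: minimal blow-up solution.} Suppose the theorem fails. By the linear profile decomposition for the non-Galilean-invariant propagator $(e^{it\Delta},e^{i\kappa t\Delta})$ and the nonlinear profiles, both already constructed in \cite{MR4360610} for non-radial data, we obtain a critical solution $(u_c,v_c)$. It is global, does not scatter, and satisfies $E M<E(\phi,\psi)M(\phi,\psi)$ and $K\ge0$. Its orbit $\{(u_c,v_c)(t,\cdot-x(t))\}_{t\ge0}$ is precompact in $H^1\times H^1$ for some continuous $x(t)$. Because Galilean invariance is lost, no frequency parameter appears and the only modulation is translation.

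\textbf{Step 3: rigidity, the main obstacle.} In the radial case, or when $\kappa=1/2$, we could Galilean-boost to $P=0$ and then use $|x(t)|=o(t)$. Here neither is available, so we follow Pausader~\cite{MR2779061}. Rotate coordinates so that $P(u_c,v_c)$ points in the $x_1$-direction, possibly $P=0$, and write $x=(x_1,x')$ with $x'\in\R^4$. Consider the virial quantity localized only in $x'$:
\[
 V_R(t)=\im\int \nabla' w_R(x')\cdot\Bigl(\overline{u}\nabla' u+\tfrac12\overline{v}\nabla' v\Bigr)dx ,
\]
with $w_R(x')=R^2w(x'/R)$ and $w(y)=|y|^2$ near the origin. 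Its derivative equals $2K'(u,v)$ plus errors supported in $|x'|\gtrsim R$. Here $K'$ is the analogue of $K$ built from the $x'$-derivatives with the dimension-$4$ coefficient. We bound $K'$ below by a positive multiple of $\|\nabla(u,v)\|_{L^2}^2$, using the Step 1 coercivity and the fact that $\partial_{x_1}$ is not distinguished by compactness. If this full-gradient bound is awkward, we argue directly with the $x'$-gradients and the potential term.

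To control the errors we need the transverse center $x'(t)$ to satisfy $|x'(t)|=o(t)$. This follows from the transverse momentum vanishing, $\im\int \overline{u}\nabla' u+\frac12\overline{v}\nabla' v=0$, via a localized center-of-mass argument in the $x'$-directions. We must compute $\frac{d}{dt}\int x'(|u|^2+2|v|^2)\chi$ carefully: since $\kappa\ne1/2$, this derivative gives $\im\int(2\overline{u}\nabla' u+2\kappa\cdot 2\,\overline{v}\nabla' v)$. That is not a multiple of the conserved $P'$, which is the genuine difficulty.

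To handle it, choose weights in the mass functional, such as $|u|^2+2|v|^2$ versus $|u|^2+\frac{1}{2\kappa}|v|^2$, so that the time derivative equals $P'$. The mass-transfer terms then no longer cancel exactly, since $\frac{d}{dt}(|u|^2+2|v|^2)$ is the conserved density. We expect the mismatch to be handled by choosing the rotation so that the weighted momentum $\im\int\overline{u}\nabla u+\kappa\overline{v}\nabla v$ has vanishing $x'$-components, rather than $P$. If that quantity is not conserved, we instead bound its time average using compactness.

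With $|x'(t)|=o(t)$ in hand, integrating $V_R'$ over $[0,T]$ with $R=\eps T$ gives $T\lesssim \eps T\,\|u\|_{H^1}^2+o(T)$. This is a contradiction, so the critical solution cannot exist and the theorem follows.
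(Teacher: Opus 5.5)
\textbf{Main gap: centering of the transverse virial.} Your weight $w_R(x')$ is centered at $x'=0$. Its error terms are therefore small only if the solution stays in $\{|x'|\lesssim R\}$, i.e.\ only if the transverse part $\Pi x(t)$ of the translation parameter is $o(t)$. The vanishing of the transverse momentum $\Pi P$ does not give this. With the normalization here the conserved density is $|u|^2+|v|^2$; with weight $2$ on $|v|^2$ the mass-transfer terms $\im(v\overline u^2)$ do not cancel. The transverse center of $|u|^2+|v|^2$ moves with velocity $\frac2M\Pi\Pe(t)$, and $\Pe$ is neither conserved for $\kappa\neq\frac12$ nor tied to $P$. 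So nothing forces $\frac1t\int_0^t\Pi\Pe\,ds\to0$.

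None of your fallbacks repairs this:
\begin{itemize}
\item Reweighting the density so that the transport part produces $P$ leaves a term $2(1-\frac1{2\kappa})\im\int x'\,v\overline u^2\,dx$, which is of size $|x'(t)|$ and has no sign.
\item No fixed rotation kills $\Pi\Pe(t)$ for all $t$, since the direction of $\Pe(t)$ varies. A time-dependent $\Pi$ would produce new terms.
\item Precompactness modulo translations says nothing about the time average of $\Pi\Pe$.
\end{itemize}

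The missing idea is to let the truncation follow the solution. Use the weight $a_R(x-X(t))$, with $a_R(z)=\chi_R(|z|)\Pi z$ and $X(t)=\frac2M\int_0^t\Pe(s)\,ds$. Then $x(t)-X(t)=o(t)$ by Lemma~\ref{lem:center}, which singles out no direction. The only new term comes from $\dot X$ and equals $-2\Pi\dot X\cdot P=-\frac4M(\Pi\Pe(t))\cdot P$. It vanishes identically because $P\parallel e$. Before truncation, $\im\int\Pi(x-X)\cdot(\overline u\nabla u+\frac12\overline v\nabla v)\,dx$ equals the same integral with $X=0$, since $\Pi X\cdot P=0$; the whole point is where you truncate.

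\textbf{Second gap: the lower bound for $K'=K_\perp=L_\perp+N$.} Coercivity of $K=L+\frac54N$ does not control $K_\perp=K-L_\parallel-\frac14N$. A precompact orbit modulo translations need not be isotropic, so ``$\partial_{x_1}$ is not distinguished'' is not an argument. What you need is strict positivity of $K_\perp$ on the whole set $\{S_\omega<\mu_\omega,\ K\ge0\}\setminus\{(0,0)\}$.

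The paper obtains this from the anisotropic $L^2$-preserving scaling $\lambda^2(u,v)(x_1,\lambda\widetilde x)$. It fixes $L_\parallel$ and $M$ and multiplies $L_\perp$ and $N$ by $\lambda^2$, so $S_\omega=\frac12L_\parallel+\frac{\lambda^2}2K_\perp+\frac\omega2M$ and $K=L_\parallel+\lambda^2(K_\perp+\frac14N)$. If $K_\perp\le0$ then $N<0$. Increasing $\lambda$ from $1$ then lowers $S_\omega$ while driving $K$ down to $0$, contradicting the definition of $\mu_\omega$. Compactness of the orbit and conservation of mass then give $K_\perp(u(t),v(t))\ge\delta>0$ uniformly in $t$, which is all the rigidity step needs.
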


In the mass resonance case, i.e. $\kappa=1/2$, the theorem is known by \cite{Hamano2018} as stated above. The new part is non-mass-resonant case, i.e. $\kappa\neq 1/2$. Moreover, it is worth emphasizing that radial symmetry is NOT imposed on the initial data in the theorem.

To prove Theorem~\ref{thm:main}, we use the concentration compactness and rigidity argument of Kenig--Merle \cite{MR2257393}. We refer the reader to \cite{DHR08,FXC11,AkNa13,Gue14} for this method in the case of the scalar NLS. If Theorem~\ref{thm:main} fails, the concentration compactness argument in \cite{MR4360610} gives a nonzero critical element. Its orbit is precompact in $H^1\times H^1$ modulo translations. This part does not use radial symmetry or $P=0$.

The difficulty is the rigidity argument. Let $x(t)$ be the translation parameter of the critical element. In the non-mass-resonant case, we cannot use the Galilean invariance and thus $x(t)/t \to 0$ is not clear. To clarify the behavior of $x(t)$, the authors of~\cite{MR4360610} introduced
\begin{align}
 \Pe(t)&\coloneqq \im\int_{\R^5}
 \left(\overline u\nabla u+\kappa\overline v\nabla v\right)(t,x)\,dx,
 \label{def:Ptilde}\\
 X(t)&\coloneqq \frac{2}{M(u,v)}\int_0^t\Pe(s)\,ds,
 \label{def:X}
\end{align}
and proved
\[
 \frac{x(t)-X(t)}{t}\to 0 \quad(t\to\infty).
\]
We note that the quantity $\Pe(t)$ need not be conserved if $\kappa \neq 1/2$. To exclude the critical element, the authors also introduced the following virial identity --- actually, we used its localized version --- 
\begin{align*}
	\frac{d}{dt} \im \int (x-X(t)) \cdot \left( \overline{u}\nabla u + \frac{1}{2}  \overline{v}\nabla v  \right) dx  =2K(u,v) - \frac{2}{M(u,v)} \Pe (t) \cdot P(u,v). 
\end{align*}
The sign of the second term is not clear. This is why the rigidity theorem in \cite{MR4360610} assumes $P=0$. 

In the present paper, to overcome this difficulty, we follow an idea of Pausader \cite{MR2779061}. He used a virial identity in a direction orthogonal to the momentum for the defocusing beam equation. Choose $e=P/|P|$ if $P\neq0$, and choose any unit vector $e$ if $P=0$. Let $\Pi$ be the orthogonal projection onto the perpendicular direction to $e$, which is denoted by $e^\perp$. Let us consider the projected virial identity onto $e^{\perp}$
\begin{equation*}
	\frac{d}{dt} \im \int \Pi (x-X(t)) \cdot \left( \overline{u}\nabla u + \frac{1}{2}  \overline{v}\nabla v  \right) dx 
	=2K_{\perp}(u,v) -\frac2{M(u,v)}(\Pi\Pe(t))\cdot P(u,v),
\end{equation*}
where 
\begin{equation*}
	K_{\perp} (u,v) \coloneqq \norm{\Pi \nabla u}_{L^2}^2+\frac\kappa2\norm{\Pi \nabla v}_{L^2}^2
	+\re\int_{\R^5}\overline v u^2\,dx. 
\end{equation*}
The second term vanishes because $\Pi\Pe(t)\in e^\perp$ and $P\in\operatorname{span}\{e\}$. While the second term becomes harmless, we need to show the first term $K_{\perp}$ is uniformly positive. Pausader \cite{MR2779061} considered the defocusing problem so that we do not need to take care of the sign of the functional. However, we should show the positivity since the problem is focusing. 
To overcome this, we use a mass-preserving scaling in these four variables with respect to $e^{\perp}$. Due to this scaling, we can show that $K_{\perp}$ is positive below the ground state in the positive potential well. Thus, the precompactness of the orbit of the critical element yields a uniform-in-time lower bound for $K_{\perp}$. The above virial identity, or more precisely its localized version, yields a contradiction. 

Let us remark that our method is applicable to other equations without Galilean invariance, though it has the restriction such that we can only treat the case that the spatial dimension $d$ is greater or equal to $2$ and the nonlinearity is the $L^2(\mathbb{R}^{d-1})$-critical and supercritical (equivalently $\dot{H}^{1/2}(\mathbb{R}^d)$-critical and supercritical in terms of the exponent). Note that both restrictions do not matter in our setting of the present paper. See \cite{IKNpre} as an example, in which we consider the nonlinear fourth-order Schr\"{o}dinger equation.

\textit{Notation.} For $(u,v)\in H^1(\R^5)\times H^1(\R^5)$, set
\begin{align}
	L(u,v)& \coloneqq \|\nabla u\|_{L^2}^2+\frac{\kappa}{2}\|\nabla v\|_{L^2}^2,\label{def:L}\\
	N(u,v)&\coloneqq \re\int_{\R^5}\overline v u^2\,dx. \label{def:N}
\end{align}
Here $L$ and $N$ are the kinetic and interaction parts of the energy. Thus we have $E=L+N$. 

Let $\dot{F}$ denote the time derivative of $F(t)$.

\section{Preliminaries}
\label{sec:prelim}

\subsection{Variational setting}

For $\omega>0$, we set
\begin{equation}
	S_{\omega}(u,v)\coloneqq \frac12E(u,v)+\frac\omega2M(u,v),
\label{def:I}
\end{equation}
\begin{equation}
 \mu_\omega\coloneqq \inf\left\{
 S_{\omega}(f,g)\semicolon (f,g)\in
 \left(H^1(\R^5)\times H^1(\R^5)\right)\setminus\{(0,0)\},
 \ K(f,g)=0
 \right\}.
\label{def:I-mu}
\end{equation}
For the rescaled ground state
\[
 (\phi_\omega,\psi_\omega)(x)
 \coloneqq \omega(\phi,\psi)(\sqrt\omega x),
\]
the variational theory developed in \cite{MR4360610} gives
\begin{equation}
 \mu_\omega= S_{\omega}(\phi_\omega,\psi_\omega).
\label{ground-action}
\end{equation}
The condition
\begin{equation}
 E(f,g)M(f,g)<E(\phi,\psi)M(\phi,\psi)
\label{EM-threshold}
\end{equation}
holds if and only if $ S_{\omega}(f,g)<\mu_\omega$ for some $\omega>0$.

The following invariance is proved in~\cite[Lemma~2.10]{MR4360610}.
\begin{lemma}[Invariance by the flow]
\label{lem:sign}
Let $\omega>0$, $(u_0,v_0)\in H^1(\R^5)\times H^1(\R^5)$, and 
$(u,v)$ be the solution with $(u,v)(0)=(u_0,v_0)$. Assume that
\[
  S_{\omega}(u_0,v_0)<\mu_\omega,
 \qquad K(u_0,v_0)> 0.
\]
Then $(u,v)$ is global and $K(u(t),v(t))> 0$ for all $t$.
\end{lemma}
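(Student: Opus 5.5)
The plan is the standard potential-well argument: combine local well-posedness, a blow-up alternative in $H^1$, the conservation laws, and a continuity argument for the sign of $K$, with a variational lower bound keeping $K$ away from zero.

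\emph{Step 1: relating $S_\omega$ and $K$.} Observe that
\[
S_\omega = \tfrac12(L+N) + \tfrac{\omega}{2}M, \qquad K = L + \tfrac54 N,
\]
so that
\[
S_\omega - \tfrac25 K = \tfrac{1}{10}L + \tfrac{\omega}{2}M .
\]
Hence, whenever $K(u,v)\ge 0$,
\[
\tfrac{1}{10}L(u,v) + \tfrac{\omega}{2}M(u,v) \le S_\omega(u,v).
\]

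\emph{Step 2: $H^1$ bound and global existence.} By conservation of mass and energy, $S_\omega(u(t),v(t)) = S_\omega(u_0,v_0) < \mu_\omega$ for all $t$ in the maximal existence interval. As long as $K(u(t),v(t)) > 0$, Step 1 therefore gives a uniform bound on $\|(u,v)(t)\|_{H^1\times H^1}$ in terms of $\mu_\omega$ and $\omega$. Since the local well-posedness time in $H^1$ depends only on the size of the $H^1$ norm (the problem is subcritical), the blow-up alternative then yields global existence. So everything reduces to showing that $K$ stays positive.

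\emph{Step 3: $K$ cannot vanish.} Let $J$ be the maximal existence interval and suppose $K(u(t),v(t))$ vanishes somewhere. By continuity of $t\mapsto (u,v)(t)$ in $H^1$, hence of $K$, and since $K(u_0,v_0) > 0$, there is a first time $t_0$ after $0$ (or a last time before $0$) with $K(u(t_0),v(t_0)) = 0$. Mass conservation gives $(u,v)(t_0) \neq (0,0)$, because $M(u_0,v_0) > 0$ (the data are nonzero since $K > 0$). The definition of $\mu_\omega$ then forces
\[
S_\omega(u(t_0),v(t_0)) \ge \mu_\omega,
\]
contradicting $S_\omega(u(t_0),v(t_0)) = S_\omega(u_0,v_0) < \mu_\omega$. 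Thus $K > 0$ on all of $J$, and Step 2 gives $J = \mathbb{R}$.

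The only potentially delicate points are the continuity of $K$ along the flow, which is immediate from $H^1$ continuity together with the Sobolev embedding $H^1 \subset L^3$ in $\mathbb{R}^5$ controlling $N$, and the blow-up alternative from local well-posedness. Neither is a real obstacle, so this argument avoids any quantitative coercivity estimate.
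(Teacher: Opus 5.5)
Your proof is correct. It is the standard potential-well argument: conservation of $S_\omega$, continuity of $K$ along the $H^1$ flow together with the definition of $\mu_\omega$ to rule out $K=0$, and the identity $S_\omega-\frac25K=\frac1{10}L+\frac\omega2M$ to get a uniform $H^1$ bound and hence global existence. The paper does not reprove this lemma but quotes it from \cite[Lemma~2.10]{MR4360610}, where it is presumably established by this same standard argument.
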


\begin{remark}
Under the condition $S_{\omega}(u_0,v_0)<\mu_\omega$, $K(u_0,v_0)=0$ happens if and only if $(u_0,v_0)=(0,0)$. 
\end{remark}

\subsection{Critical element}

The following proposition is proved in~\cite[Propositions~1.2 and~2.17]{MR4360610}. Its proof uses neither radial symmetry nor zero momentum.

\begin{proposition}[Critical element]
\label{prop:critical}
Suppose that forward scattering in Theorem~\ref{thm:main} fails. Then there exist $\omega>0$, a nonzero global nonscattering solution $(u^*,v^*)$, and $x\in C([0,\infty);\R^5)$ such that
\begin{equation}
S_{\omega}(u^*_0,v^*_0)<\mu_\omega,
 \qquad K(u^*_0,v^*_0)>0,
\label{critical-well}
\end{equation}
and
\[
 \{(u^*,v^*)(t,\cdot+x(t))\semicolon t\in[0,\infty)\}
\]
is precompact in $H^1(\R^5)\times H^1(\R^5)$.
\end{proposition}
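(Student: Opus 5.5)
Proposition~\ref{prop:critical} is the standard Kenig--Merle concentration-compactness step, and the paper says it is proved in \cite{MR4360610} without radial symmetry or zero momentum. My plan is to rerun that argument and check that neither assumption is ever used.

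\textbf{Step 1: the critical threshold.} I would parametrize the data by the action level. Fix $\omega>0$. For $0<\delta\le\mu_\omega$, define the claim $SC(\delta)$: every solution with $S_\omega(u_0,v_0)<\delta$ and $K(u_0,v_0)>0$ has finite scattering norm, for instance in $L^q_tL^r_x$ at $\dot H^{1/2}$-critical regularity. Let $\delta^*$ be the supremum of the $\delta$ for which $SC(\delta)$ holds.

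\textbf{Step 2: $\delta^*>0$.} Under $K>0$ the variational estimates give $S_\omega\sim L+\omega M/2$. Hence small action means small $H^1$ data, and small data scattering, which follows from Strichartz estimates, gives $\delta^*>0$. If forward scattering fails for some $\omega$, then $\delta^*<\mu_\omega$.

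\textbf{Step 3: extracting a minimal blow-up solution.} Take a sequence of data with $S_\omega\to\delta^*$, $K>0$, and scattering norms tending to infinity. Apply the linear profile decomposition in $H^1\times H^1$ adapted to the propagators $e^{it\Delta}$ and $e^{i\kappa t\Delta}$. The symmetry group consists only of space and time translations; there is no scaling, since we are in $H^1$, and no Galilean boosts. This is exactly why $\kappa\neq1/2$ causes no trouble at this stage. Asymptotic orthogonality of $M$, $L$ and $N$ gives decoupling of $S_\omega$ and $K$, and with it $K>0$ for each profile. Nonlinear profiles are built, using wave operators for profiles with time parameter $\to\pm\infty$. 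Long-time perturbation theory then shows that if two or more profiles were nontrivial, each would have action below $\delta^*$ and hence scatter, which is a contradiction. So there is exactly one profile, and the remainder vanishes in $H^1$. That profile gives $(u^*,v^*)$, which satisfies \eqref{critical-well}, is global by Lemma~\ref{lem:sign}, and does not scatter forward.

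\textbf{Step 4: precompactness.} Rerun the same argument along any time sequence $(u^*,v^*)(t_n)$, using minimality. This yields $x(t_n)$ with $(u^*,v^*)(t_n,\cdot+x(t_n))$ convergent in $H^1\times H^1$. A standard argument upgrades this to a continuous choice of $x(t)$.

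\textbf{Expected obstacle.} The hardest step is the profile decomposition for the system with two different dispersion coefficients. One has to check that the orthogonality of translation parameters works jointly for both components, that is, that a single pair of parameters $(t_n,x_n)$ is shared by $u$ and $v$. Since there are no boosts, the parameters are just space-time translations acting identically on both components, so this should go through. Momentum never enters the argument, so no assumption $P=0$ is needed.
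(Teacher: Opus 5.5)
Your proposal is correct and is essentially the paper's route: the paper gives no new argument here but invokes the Kenig--Merle concentration-compactness construction of \cite{MR4360610} (Propositions~1.2 and~2.17 there), noting, as you do, that the $H^1$ profile decomposition uses only space-time translations and that neither radial symmetry nor $P=0$ is needed. The steps you leave schematic are the standard ones handled in that reference: deducing $K>0$ for each profile from the decoupling via the nonnegative functional $S_\omega-\frac25K=\frac1{10}L+\frac\omega2M$, and choosing $x(t)$ continuous.
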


We also use the following two lemmas, which are shown in~\cite[Lemmas~2.26 and~2.28]{MR4360610}.

\begin{lemma}
\label{lem:tail}
Let $(u,v)$ be a global solution on $[0,\infty)$. Suppose that $\{(u,v)(t,\cdot+x(t))\semicolon  t\in[0,\infty)\}$ is precompact in $H^1(\mathbb{R}^5)\times H^1(\mathbb{R}^5)$ for some continuous $x(t)$. Then, for any $\eps>0$, there exists $R_0>0$ such that
\begin{equation}
\sup_{t\geq0}\int_{|x-x(t)|\geq R_0}
 \left(|\nabla u|^2+|u|^2+|\nabla v|^2+|v|^2+|v||u|^2\right)(t,x)dx
 \leq\eps.
\label{tail-estimate}
\end{equation}
\end{lemma}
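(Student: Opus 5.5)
The estimate \eqref{tail-estimate} follows directly from precompactness. We use the standard fact that a precompact subset of $L^1$ (or of $L^2$) is uniformly tight: its mass outside large balls is uniformly small.

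First we reduce to the translated family. Set $(\tilde u,\tilde v)(t,y)\coloneqq (u,v)(t,y+x(t))$ and $\mathcal K\coloneqq\{(\tilde u,\tilde v)(t)\semicolon t\ge0\}$, which is precompact in $H^1\times H^1$ by assumption. The change of variables $y=x-x(t)$ shows that the integral in \eqref{tail-estimate} equals
\[
\int_{|y|\ge R_0}\bigl(|\nabla\tilde u|^2+|\tilde u|^2+|\nabla\tilde v|^2+|\tilde v|^2+|\tilde v||\tilde u|^2\bigr)(t,y)\,dy .
\]
So it suffices to show that for each $\eps>0$ there is $R_0$ such that
\[
\sup_{(f,g)\in\mathcal K}\int_{|y|\ge R_0}\bigl(|\nabla f|^2+|f|^2+|\nabla g|^2+|g|^2+|g||f|^2\bigr)dy\le\eps .
\]

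Next we use a finite net. Fix $\delta>0$, to be chosen depending on $\eps$. By precompactness, choose $(f_1,g_1),\dots,(f_J,g_J)\in\mathcal K$ so that every element of $\mathcal K$ lies within $\delta$ of some $(f_j,g_j)$ in $H^1\times H^1$. Each of these finitely many functions lies in $H^1$, and $|g_j||f_j|^2\in L^1$ by H\"older and Sobolev, since $H^1(\R^5)\subset L^3$. By dominated convergence there is then $R_0$ such that for all $j$,
\[
\int_{|y|\ge R_0}\bigl(|\nabla f_j|^2+|f_j|^2+|\nabla g_j|^2+|g_j|^2+|g_j||f_j|^2\bigr)dy\le\delta^2 .
\]

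Finally we compare an arbitrary element with the net. Take $(f,g)\in\mathcal K$ with $\|(f,g)-(f_j,g_j)\|_{H^1}<\delta$, and write $\chi=\mathbf 1_{|y|\ge R_0}$. For the quadratic terms, the triangle inequality in $L^2$ gives
\[
\|\chi\nabla f\|_{L^2}\le\|\chi\nabla f_j\|_{L^2}+\delta\le2\delta ,
\]
so $\|\chi\nabla f\|_{L^2}^2\le4\delta^2$, and likewise for $f$, $g$, $\nabla g$. For the cubic term we have
\[
\|\chi\, g f^2\|_{L^1}\le\|\chi g\|_{L^3}\|\chi f\|_{L^3}^2 .
\]
The key step is controlling $\|\chi f\|_{L^3}$. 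Here we do not cut off with a sharp indicator inside the $H^1$ norm. Instead we use
\[
\|\chi f\|_{L^3}\le\|\chi f_j\|_{L^3}+C\delta ,
\]
which holds by the global Sobolev embedding $\|f-f_j\|_{L^3}\le C\|f-f_j\|_{H^1}$. We also enlarge $R_0$ so that $\|\chi f_j\|_{L^3},\|\chi g_j\|_{L^3}\le\delta$ for all $j$, again possible by dominated convergence for finitely many functions. This gives $\|\chi g f^2\|_{L^1}\le C\delta^3$.

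Summing the five terms yields the bound $20\delta^2+C\delta^3$ uniformly over $\mathcal K$, hence uniformly in $t\ge0$. Choosing $\delta$ small in terms of $\eps$ completes the proof.
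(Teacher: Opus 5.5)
Your proof is correct. The cubic term is handled properly: you use H\"older and the global embedding $H^1(\R^5)\subset L^3$ on $f-f_j$ rather than on the truncated function. The paper does not prove this lemma itself; it cites it from \cite[Lemma~2.26]{MR4360610}, and your finite-net (tightness) argument is the standard proof of such a statement.
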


\begin{lemma}
\label{lem:center}
Assume the hypotheses of Lemma~\ref{lem:tail}. Then it holds that
\begin{equation}
 \frac{x(t)-X(t)}{t}\to 0
 \quad(t\to\infty),
\label{center-tracking}
\end{equation}
where $X$ is defined in \eqref{def:X}. 
\end{lemma}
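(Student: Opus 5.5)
The statement to prove is Lemma~\ref{lem:center}: if the orbit $\{(u,v)(t,\cdot+x(t))\}$ is precompact in $H^1\times H^1$, then $(x(t)-X(t))/t\to0$. The plan is to build a localized center of mass $Z_R(t)$ that stays within $O(R)$ of $x(t)$. Its time derivative should equal $\frac{2}{M}\Pe(t)$ up to a small error. Integrating then gives $Z_R(t)-Z_R(0)=X(t)+o(t)+O(\eps t)$, from which the claim follows.

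\emph{Construction.} Fix $\eps>0$ and take $R_0$ from Lemma~\ref{lem:tail}. Let $\chi\in C_c^\infty(\R^5)$ be radial with $\chi=1$ on $|x|\le1$ and $\chi=0$ for $|x|\ge2$, and set $\phi_R(x)=x\chi(x/R)$. Define
\[
 Z_R(t)\coloneqq \frac{1}{M}\int_{\R^5}\phi_R(x)\bigl(|u|^2+2|v|^2\bigr)(t,x)\,dx ,\qquad M=M(u,v).
\]
The weight $|u|^2+2|v|^2$ is chosen because it is the density whose evolution is a pure divergence. From the equations, $\partial_t|u|^2=-2\nabla\cdot\im(\bar u\nabla u)+2\im(v\bar u^2)$ and $\partial_t|v|^2=-2\kappa\nabla\cdot\im(\bar v\nabla v)+2\im(u^2\bar v)$. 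Since $\im(v\bar u^2)=-\im(u^2\bar v)$, the nonlinear terms cancel in $|u|^2+2|v|^2$. Integrating by parts then gives
\[
 \dot Z_R(t)=\frac{2}{M}\im\int \nabla\phi_R\,\bigl(\bar u\nabla u+2\kappa\bar v\nabla v\bigr)\,dx .
\]

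\emph{Normalization check.} The definition of $\Pe$ in \eqref{def:Ptilde} carries the factor $\kappa$, not $2\kappa$. I expect this to match the paper's intended convention under some choice of weights, for instance the density $|u|^2+|v|^2$ paired with a conserved mass in which $v$ enters with weight $2$. Before writing details I would recheck the mass normalization so that $\dot Z_R$ agrees exactly with $\frac{2}{M}\Pe$ up to localization errors. Possibly the weight of $v$ in $M$ should itself be adjusted; only the structure of the argument matters here.

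\emph{Main obstacle: the recentering problem.} The cutoff is centered at $0$, but the solution sits near $x(t)$, which moves. I would handle this with a contradiction and bootstrap argument in the style of Duyckaerts--Holmer--Roudenko. Suppose $|x(t_n)-X(t_n)|\ge\delta t_n$ along some $t_n\to\infty$. Using continuity of $x$ and translating so that $x(0)=0$, choose $R=R_0+\sup_{[0,t_n]}|x(t)|$ or a comparable quantity. On $[0,t_n]$ the tail is then controlled: $\nabla\phi_R=I$ on the region carrying all but $\eps$ of the mass and kinetic energy. This gives $|\dot Z_R-\frac{2}{M}\Pe|\lesssim\eps$ and $|Z_R(t)-x(t)|\lesssim R_0+\eps R$, where the latter uses $|\phi_R|\lesssim R$ and the tail bound. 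Integrating from $0$ to $t_n$ yields $|x(t_n)-X(t_n)|\lesssim R_0+\eps R+\eps t_n$.

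The delicate point is that $R$ depends on $\sup|x|$, which could grow linearly. I would first establish a crude bound $|x(t)|\lesssim 1+t$. This follows from the same computation applied on unit time intervals, since $|\dot Z_R|\lesssim\sup_t\|(u,v)\|_{H^1}^2$ is uniformly bounded. With $R\lesssim t_n$, the error is $O(\eps t_n)$. Letting $n\to\infty$ and then $\eps\to0$ contradicts the assumed $\delta t_n$ gap, which proves \eqref{center-tracking}.
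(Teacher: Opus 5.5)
The paper does not reprove this lemma; it quotes it from \cite[Lemma~2.28]{MR4360610}. Your localized center-of-mass argument with a self-consistently chosen radius is the standard route to it, but the identity on which everything rests is miscomputed.

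By your own formulas, the nonlinear part of $\partial_t(|u|^2+c|v|^2)$ is $2\im(v\overline u^2)+2c\,\im(u^2\overline v)=2(1-c)\im(v\overline u^2)$. This vanishes for $c=1$, not for $c=2$. With $c=2$ a term $-2\im(v\overline u^2)$ survives in $\dot Z_R$. Moreover $\int(|u|^2+2|v|^2)\,dx$ is not conserved, so $Z_R$ would not stay within $O(R_0+\eps R)$ of $x(t)$ either. The correct density is $|u|^2+|v|^2$, which is exactly the density of the paper's mass $M$. Then
\[
 \partial_t\bigl(|u|^2+|v|^2\bigr)=-2\nabla\cdot\im\bigl(\overline u\nabla u+\kappa\overline v\nabla v\bigr),
 \qquad
 \dot Z_R=\frac2M\im\int_{\R^5}\nabla\phi_R\bigl(\overline u\nabla u+\kappa\overline v\nabla v\bigr)dx,
\]
which equals $\frac2M\Pe(t)$ up to a term supported where $\nabla\phi_R\neq I$. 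So your ``normalization check'' is a symptom of this slip, not of the paper's conventions. The mass $M=\norm{u}_{L^2}^2+\norm{v}_{L^2}^2$, the factor $\kappa$ in \eqref{def:Ptilde}, and the factor $2/M$ in \eqref{def:X} are mutually consistent, and nothing should be reweighted.

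With this correction the rest of your argument goes through. Translate so that $x(0)=0$, and for $t\le T$ take $R=R_0+\sup_{[0,T]}|x|$. Then $\{|x|\geq R\}\subset\{|x-x(t)|\geq R_0\}$, so Lemma~\ref{lem:tail} gives $|\dot Z_R-\frac2M\Pe|\lesssim\eps$ and $|Z_R(t)-x(t)|\lesssim R_0+\eps R$.

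The one step stated too loosely is the crude bound. A uniform bound on $|\dot Z_R|$ does not by itself control $|x(t)-x(t_0)|$, because $Z_R$ follows $x(t)$ only while the bulk of the solution stays in the ball where $\phi_R(y)=y$. On each unit interval you must again take $R$ depending on $\sup|x(s)-x(t_0)|$, which is finite by continuity of $x$, and absorb the resulting $\eps R$ term for small $\eps$.

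In fact no separate crude bound is needed. Apply your estimate at every $t\in[0,T]$ and use $|X(t)|\le\frac2M\sup_s|\Pe(s)|\,t$. This gives $R\le C(R_0+T)+C\eps R$ with $C$ independent of $\eps$, hence $R\lesssim R_0+T$ for small $\eps$. Then $\limsup_{T\to\infty}|x(T)-X(T)|/T\lesssim\eps$ follows directly, without a contradiction argument.
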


\section{Proof of Theorem \ref{thm:main}}
\label{sec:transverse}

\subsection{Positivity of $K_{\perp}$}
For a fixed $e\in\mathbb{S}^4$, let $\Pi=\Pi_e$ denote the orthogonal projection onto $e^{\perp}$, defined by
\begin{equation}
 \Pi \xi = \Pi_e\xi\coloneqq \xi-(\xi\cdot e)e,
 \qquad \xi\in\mathbb{R}^5,
\label{def:projection}
\end{equation}
and set
\begin{align}
 \nabla_\perp&\coloneqq \Pi\nabla,\\
 L_\perp(u,v)&\coloneqq \norm{\nabla_\perp u}_{L^2}^2+
 \frac\kappa2\norm{\nabla_\perp v}_{L^2}^2,\\
 K_\perp(u,v)&\coloneqq L_\perp(u,v)+N(u,v).
\label{def:Kperp}
\end{align}

Note that $L_\perp(u,v)>0$ if $(u,v) \neq (0,0)$.

\begin{proposition}[Positivity]
\label{prop:Kperp-positive}
Let $\omega>0$. If $(u,v)\in H^1(\R^5)\times H^1(\R^5)$ satisfies
\begin{equation}
 S_{\omega}(u,v)<\mu_\omega,
 \qquad K(u,v)\geq0,
 \qquad (u,v)\neq(0,0),
\label{positive-well}
\end{equation}
then, for every $e\in\mathbb{S}^4$, it holds that
\begin{equation}
 K_\perp(u,v)>0.
\label{Kperp-positive}
\end{equation}
\end{proposition}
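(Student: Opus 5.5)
The plan is to argue by contradiction, using a mass-preserving anisotropic scaling that acts only in the directions of $e^\perp$ and leaves the $e$-direction untouched. Fix $e\in\mathbb{S}^4$ and write $x=(x_e,x')$ with $x_e=x\cdot e\in\R$ and $x'=\Pi x\in e^\perp\cong\R^4$. For $\lambda>0$ set
\[
 (u_\lambda,v_\lambda)(x)\coloneqq \lambda^{2}(u,v)(x_e e+\lambda x').
\]
Since the scaling acts on four variables, the prefactor $\lambda^{4/2}=\lambda^2$ makes it $L^2$-preserving, so $M(u_\lambda,v_\lambda)=M(u,v)$. Put $L_\parallel\coloneqq L-L_\perp$, which involves only the $e\cdot\nabla$ derivatives. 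A change of variables then gives
\[
 L_\parallel(u_\lambda,v_\lambda)=L_\parallel(u,v),\qquad
 L_\perp(u_\lambda,v_\lambda)=\lambda^2L_\perp(u,v),\qquad
 N(u_\lambda,v_\lambda)=\lambda^{2}N(u,v).
\]
For $N$, the amplitude contributes $\lambda^6$ and the Jacobian contributes $\lambda^{-4}$. Consequently
\[
 S_\omega(u_\lambda,v_\lambda)=\tfrac12L_\parallel+\tfrac{\lambda^2}2K_\perp(u,v)+\tfrac\omega2M,\qquad
 K(u_\lambda,v_\lambda)=L_\parallel+\lambda^2\Bigl(L_\perp+\tfrac54N\Bigr).
\]
The point is that $N$ scales exactly like $L_\perp$, which is where $\dot H^{1/2}(\R^5)$-criticality (equivalently $L^2(\R^4)$-criticality) enters.

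Now suppose $K_\perp(u,v)\le0$. Because $(u,v)\neq(0,0)$, we have $L_\perp>0$, and hence $N<-L_\perp<0$. Then $L_\perp+\frac54N<L_\perp+N\le0$, so $\lambda\mapsto K(u_\lambda,v_\lambda)$ is strictly decreasing on $(0,\infty)$ and tends to $-\infty$.

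First I rule out $K(u,v)=0$. Under $S_\omega(u,v)<\mu_\omega$ this forces $(u,v)=(0,0)$ by the remark after Lemma~\ref{lem:sign}, or directly by the definition of $\mu_\omega$, since a nonzero pair would be admissible with $S_\omega<\mu_\omega$. So $K(u,v)>0$. By the intermediate value theorem there is $\lambda_0>1$ with $K(u_{\lambda_0},v_{\lambda_0})=0$. Moreover $(u_{\lambda_0},v_{\lambda_0})\neq(0,0)$, and
\[
 S_\omega(u_{\lambda_0},v_{\lambda_0})=S_\omega(u,v)+\tfrac{\lambda_0^2-1}{2}K_\perp(u,v)\le S_\omega(u,v)<\mu_\omega .
\]
This contradicts the definition \eqref{def:I-mu} of $\mu_\omega$, and therefore $K_\perp(u,v)>0$.

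The only step needing care is the scaling computation, namely checking that $N$ picks up exactly $\lambda^2$ while $L_\parallel$ is invariant. This is a routine change of variables in $x'$; everything else is elementary monotonicity in $\lambda$.
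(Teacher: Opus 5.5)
Your proof is correct and matches the paper's argument: the same $L^2$-preserving scaling in the four directions of $e^\perp$, the same scaling laws for $S_\omega$ and $K$, and the same contradiction from a zero $\lambda_0\geq1$ of $\lambda\mapsto K(u_\lambda,v_\lambda)$. Your separate exclusion of $K(u,v)=0$ is harmless but not needed, since $\lambda_0=1$ already gives the contradiction; and note the small slip that $K_\perp\le0$ only yields $N\le -L_\perp$, not $N<-L_\perp$, though only $N<0$ is used afterwards.
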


\begin{proof}
By rotational invariance, we may assume that $e=(1,0,0,0,0)$. We write $x=(x_1,\widetilde{x})\in\R\times\R^4$. Consider the following $L^2$-invariant scaling with respect to four variables $\widetilde{x}$,;  
\begin{equation}
 (u^\lambda,v^\lambda)(x_1,\widetilde{x})
 \coloneqq \lambda^2(u,v)(x_1,\lambda \widetilde{x}),
 \qquad\lambda>0.
\label{transverse-scaling}
\end{equation}
Setting $L_\parallel \coloneqq L-L_\perp$, we have
\begin{align}
 S_{\omega}(u^\lambda,v^\lambda)
 &=\frac12L_\parallel+
 \frac{\lambda^2}{2}\l(L_\perp+N\r)+\frac\omega2M,\label{I-scaling}\\
 K(u^\lambda,v^\lambda)
 &=L_\parallel+\lambda^2\left(L_\perp+\frac54N\right).
\label{K-scaling}
\end{align}
Suppose that $K_\perp(u,v)\leq0$. Since $L_{\perp}>0$, we have
\[
 N\leq-L_\perp<0,
 \qquad
 L_\perp+\frac54N=K_\perp+\frac14N<0.
\]
For $\lambda\geq1$, $S_{\omega}(u^\lambda,v^\lambda)$ is nonincreasing. Also, $K(u^\lambda,v^\lambda)$ is strictly decreasing and tends to $-\infty$ as $\lambda \to \infty$. Since $K(u,v)\geq0$, there exists $\lambda_0\geq1$ such that $K(u^{\lambda_0},v^{\lambda_0})=0$. Therefore,
\[
 \mu_\omega\leq S_{\omega}(u^{\lambda_0},v^{\lambda_0})
 \leq S_{\omega}(u,v)<\mu_\omega
\]
This is a contradiction.
\end{proof}

\begin{remark}
Actually, $K_{\perp}$ characterizes the minimizing problem $\mu_{\omega}$. See Proposition \ref{prop:B.1} for the detail. 
\end{remark}

This positivity can be extended to uniform positivity if the orbit of the solution is compact as follows. 

\begin{corollary}
\label{cor:uniform-Kperp}
Let $(u,v)$ be a nonzero global solution and satisfy $S_{\omega}(u_0,v_0)<\mu_\omega$ for some $\omega>0$. Assume that $K(u(t),v(t))>0$ for all $t\geq0$.
Suppose also that the orbit translated by some $x(t)$ is precompact in
$H^1\times H^1$. Then, for every $e\in\mathbb{S}^4$, there exists $\delta>0$
such that
\begin{equation}
 K_\perp(u(t),v(t))\geq\delta
 \qquad(t\geq0).
\label{uniform-Kperp}
\end{equation}
\end{corollary}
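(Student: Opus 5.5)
I would argue by contradiction, combining Proposition~\ref{prop:Kperp-positive} with the precompactness of the translated orbit. Fix $e\in\mathbb{S}^4$ and suppose no $\delta>0$ works. Then there is a sequence $t_n\geq0$ with $K_\perp(u(t_n),v(t_n))\to \liminf\leq 0$. More precisely, I would take $\limsup_n K_\perp(u(t_n),v(t_n))\leq0$.

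The first step is to pass to a limit. Since $K_\perp$ involves only $\norm{\nabla_\perp u}_{L^2}^2$, $\norm{\nabla_\perp v}_{L^2}^2$ and $N(u,v)$, it is invariant under spatial translation. Therefore $K_\perp(u(t_n),v(t_n))=K_\perp(u(t_n,\cdot+x(t_n)),v(t_n,\cdot+x(t_n)))$. By precompactness, after passing to a subsequence, $(u,v)(t_n,\cdot+x(t_n))\to(f,g)$ in $H^1\times H^1$. The functionals $L_\perp$, $L$, $N$, $M$ and $K$ are continuous on $H^1\times H^1$. For $N$ this follows from the Sobolev embedding $H^1(\R^5)\subset L^3(\R^5)$ and H\"older's inequality. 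Hence $K_\perp(f,g)\leq0$.

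The second step is to verify that $(f,g)$ satisfies \eqref{positive-well}. Mass and energy are conserved and are translation invariant, so $S_\omega(f,g)=S_\omega(u_0,v_0)<\mu_\omega$. By continuity, $K(f,g)=\lim K(u(t_n),v(t_n))\geq0$. For nontriviality, mass conservation gives $M(f,g)=M(u_0,v_0)>0$, since the solution is nonzero. So $(f,g)\neq(0,0)$.

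Proposition~\ref{prop:Kperp-positive} then gives $K_\perp(f,g)>0$, which contradicts $K_\perp(f,g)\leq 0$. Hence $\inf_{t\geq0}K_\perp(u(t),v(t))>0$, and we may take $\delta$ to be this infimum.

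I do not expect a serious obstacle. The only points needing care are the translation invariance of $K_\perp$ and the continuity of $N$ in $H^1$. Note that the limit only satisfies $K\geq0$, not $K>0$, which is why Proposition~\ref{prop:Kperp-positive} was stated with the non-strict condition $K\geq 0$.
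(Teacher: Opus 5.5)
Your proposal is correct and follows essentially the same route as the paper: take a contradicting sequence, pass to an $H^1\times H^1$ limit of the translated profiles, use conservation, continuity and translation invariance to place the limit in the well with $K\geq0$ and nonzero mass, and then invoke Proposition~\ref{prop:Kperp-positive}. The only difference is cosmetic: you take $\limsup_n K_\perp(u(t_n),v(t_n))\leq 0$, whereas the paper takes $K_\perp(u(t_n),v(t_n))\to0$; the two versions are equivalent.
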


\begin{proof}
Otherwise, there is a sequence $\{t_n\}  \subset [0,\infty)$ such that
\[
 K_\perp(u(t_n),v(t_n))\to0.
\]
By precompactness, after taking a subsequence,
\[
 (u,v)(t_n,\cdot+x(t_n))\longrightarrow(f,g)
 \quad\text{in }H^1\times H^1.
\]
The conservation laws and continuity give $S_{\omega}(f,g)<\mu_\omega$ and $K(f,g)\geq0$. Conservation of mass gives $(f,g)\neq(0,0)$. Proposition~\ref{prop:Kperp-positive} gives $K_\perp(f,g)>0$, whereas continuity and translation invariance of $K_\perp$ give $K_\perp(f,g)=0$. This is a contradiction.
\end{proof}

\subsection{Localized virial identity}
\label{sec:virial}

If $P\neq0$, set $e=P/|P|$; if $P=0$, fix any $e\in\mathbb{S}^4$. Since $P$ is conserved, $e$ and the projection $\Pi$ do not depend on time. We of course have $\Pi P=0$. 
Choose $\chi\in C_c^\infty([0,\infty))$ so that
\[
 \chi(r)=1\quad(0\leq r\leq1),
 \qquad \chi(r)=0\quad(r\geq2),
\]
and, for $R\geq1$, set
\begin{equation}
 \chi_R(r)\coloneqq \chi(r/R),
 \qquad
 a_R(z)\coloneqq \chi_R(|z|)\Pi z.
\label{def:weight}
\end{equation}
For $X(t)$ in~\eqref{def:X}, define
\begin{equation}
 V_{\perp,R}(t)
 \coloneqq 2\im\int_{\R^5}a_R(x-X(t))\cdot
 \left(\overline u\nabla u+\frac12\overline v\nabla v\right)(t,x)dx.
\label{def:virial}
\end{equation}
Then we have the following localized virial identity. 

\begin{proposition}[Localized virial identity]
\label{prop:virial}
Let $(u,v)$ be a nonzero global solution uniformly bounded in $H^1(\mathbb{R}^5)\times H^1(\mathbb{R}^5)$. Then, for any $t\geq0$, we have
\begin{equation}
 \dot{V}_{\perp,R}(t)
 =4K_\perp(u(t),v(t))+\cE_R(t).
\label{projected-virial-clean}
\end{equation}
Moreover, there exists $C_0>0$ such that, for any $R\geq1$ and $t\geq0$,
\begin{equation}
 |\cE_R(t)|\leq C_0
 \int_{|x-X(t)|\geq R}
 \left(|\nabla u|^2+|u|^2+|\nabla v|^2+|v|^2+|v||u|^2\right)(t,x)dx.
\label{virial-error}
\end{equation}
The constant $C_0$ depends only on $\chi$, $\kappa$, $M(u,v)^{-1}$, and a uniform $H^1\times H^1$ bound of the solution.
\end{proposition}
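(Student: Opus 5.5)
We differentiate $V_{\perp,R}$ directly, treating separately the time dependence of the weight (through $X(t)$) and that of the solution. We first verify the identity for smooth, rapidly decaying solutions and then pass to $H^1$ solutions by the standard approximation argument using local well-posedness.

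Write $z=x-X(t)$ and $J\coloneqq \im(\overline u\nabla u+\frac12\overline v\nabla v)$, so that $P=\int J\,dx$.

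\textbf{Step 1 (motion of the center).} Differentiating the weight gives
\[
-2\int \bigl(\dot X(t)\cdot\nabla\bigr) a_R(z)\cdot J\,dx ,\qquad \dot X=\tfrac{2}{M}\Pe(t).
\]
On $|z|\le R$ we have $\chi_R\equiv1$, so $\partial_j a_R^k=\Pi_{kj}$. This contribution is therefore
\[
-\tfrac{4}{M}\int \Pi\Pe\cdot J\,dx \;+\; \text{(terms supported on } |z|\geq R).
\]
The integral over all of $\R^5$ equals $-\frac4M(\Pi\Pe)\cdot P=-\frac4M\,\Pe\cdot\Pi P=0$. Hence this piece reduces to an integral of $(\Pi_{kj}-\partial_j a_R^k)\,\Pe_j J_k$ over $|z|\ge R$. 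Since $|\Pe|\lesssim\|(u,v)\|_{H^1}^2$ and $|J|\lesssim |u||\nabla u|+|v||\nabla v|$, this is bounded by the tail integral in \eqref{virial-error}, with a constant involving $M^{-1}$ and the uniform $H^1$ bound.

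\textbf{Step 2 (evolution of $J$).} This is the standard local conservation law for the momentum density. Using the equations, the cancellation between $v\overline u$ and $u^2$ occurs precisely because the coefficient $\frac12$ in front of $\overline v\nabla v$ matches the mass weights. For a general weight $a$ one obtains
\[
2\frac{d}{dt}\im\!\int a\cdot(\overline u\nabla u+\tfrac12\overline v\nabla v)
=4\re\!\int \partial_j a^k\bigl(\partial_j\overline u\,\partial_k u+\tfrac{\kappa}{2}\partial_j\overline v\,\partial_k v\bigr)
-\int\Delta(\nabla\!\cdot a)\bigl(|u|^2+\tfrac{\kappa}{2}|v|^2\bigr)
+\int(\nabla\!\cdot a)\,\re(\overline v u^2)\cdot c
\]
with the constant $c=2$. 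We will check this constant against the unlocalized identity stated in the introduction.

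For $a=\Pi z$ we have $\partial_j a^k=\Pi_{kj}$, $\nabla\cdot a=4$ and $\Delta\nabla\cdot a=0$. Substituting these gives the main term
\[
4L_\perp+8N\cdot(\text{normalization}).
\]
This must match $4K_\perp=4L_\perp+4N$, which fixes the normalization of the nonlinear term as $(\nabla\cdot a)N=4N$. Consistency with the introduction's formula $2K_\perp$ (before the factor $2$ in $V$) confirms this.

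For the localized weight $a_R$, the differences $\partial_ja_R^k-\Pi_{kj}$, $\nabla\cdot a_R-4$ and $\Delta\nabla\cdot a_R$ are all supported in $|z|\ge R$. They are bounded uniformly in $R\ge1$, since $|z|\,|\chi_R'|\lesssim1$ and the derivatives of $\chi_R$ carry factors $R^{-1}$ that are absorbed because $R\ge1$. This yields the tail bound \eqref{virial-error} with $C_0$ depending only on $\chi$ and $\kappa$ for this part.

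\textbf{Main obstacle.} The step needing the most care is the bookkeeping in Step 2. Two points must be checked. First, the nonlinear terms combine exactly into $(\nabla\cdot a_R)\re(\overline v u^2)$, which uses the $\frac12$ weighting together with $\re(\overline v u^2)$ symmetry, and no stray terms involving the gradient of the nonlinearity survive. Second, the cancellation in Step 1 genuinely needs the full-space integral, which is why we split off the complementary region explicitly.

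Finally, $\cE_R$ is defined as the sum of all terms supported on $|z|\ge R$ from both steps, which gives \eqref{projected-virial-clean}.
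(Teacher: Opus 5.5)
Your decomposition is the same as the paper's. It has two pieces: the $\partial_t a_R(x-X(t))$ contribution, whose untruncated part vanishes because $(\Pi\Pe)\cdot P=\Pe\cdot\Pi P=0$, and the contribution of $\partial_t p$ through the local momentum law. In both, every leftover weight is bounded uniformly in $R\ge1$ and supported in $\{|x-X(t)|\ge R\}$. Step~1 and this tail bookkeeping are fine.

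\textbf{The gap is in Step~2, at the one coefficient that carries the content of the identity.} You assert that the nonlinear term is $2\int(\operatorname{div}a)\re(\overline v u^2)\,dx$ and obtain $4L_\perp+8N$ for $a=\Pi z$. You then ``fix the normalization'' by requiring agreement with $4K_\perp$ and with the formula in the introduction. That is circular, since the introduction's formula is the unlocalized form of the very identity you are proving, stated there without proof. The value $c=2$ you asserted is also wrong.

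The coefficient has to be computed from the equations. Write $F=v\overline u$, $G=u^2$, and $n=\re(\overline v u^2)$. The nonlinear part of $\partial_t\im(\overline u\,\partial_k u)$ is $2\re(\overline F\partial_k u)-\partial_k\re(\overline u F)=\re\bigl(\overline v\,\partial_k(u^2)\bigr)-\partial_k n$. The nonlinear part of $\frac12\partial_t\im(\overline v\,\partial_k v)$ is $\re(\overline{u^2}\,\partial_k v)-\frac12\partial_k n$. Since $\re\bigl(\overline v\,\partial_k(u^2)\bigr)+\re(\overline{u^2}\,\partial_k v)=\partial_k n$, the sum is $-\frac12\partial_k n$, as in \eqref{app:local-momentum}. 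Hence $2\int a_k\bigl(-\frac12\partial_k n\bigr)dx=\int(\operatorname{div}a)\,n\,dx$, so $c=1$. This gives $4N$ for $a=\Pi z$ and $\int d_R\,n\,dx$ for $a_R$.

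This is not a cosmetic normalization. With $c=2$ the main term would be $4(L_\perp+2N)$. The ground state satisfies $K_\perp=0$ with $N<0$, so $L_\perp+2N=N<0$ at $(\phi_\omega,\psi_\omega)$. By continuity the same holds at $(1-\eps)(\phi_\omega,\psi_\omega)$, which satisfies \eqref{positive-well}. So no positivity statement like Proposition~\ref{prop:Kperp-positive} would be available, and the rigidity argument would fail. You need to carry out this computation explicitly rather than infer the coefficient from the target statement.
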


\begin{proof}
For details, see the calculation in Appendix~\ref{app:virial}. Every term in $\cE_R$ contains $\chi_R-1$,
$\chi_R'$, or $\Delta\operatorname{div}a_R$. These coefficients are uniformly
bounded and supported in $\{|x-X(t)|\geq R\}$, which gives
\eqref{virial-error}.
\end{proof}

\begin{remark}
Since $a_{\infty}(z)=\Pi z$, which comes from the formal substitution $R=\infty$, $\operatorname{div}a_{\infty}=4$. Hence the nonlinear term has coefficient $4$, and the functional is $K_\perp=L_\perp+N$, not $K=L+\frac54N$.
\end{remark}

\subsection{Rigidity}
\label{sec:rigidity}

Combining the above arguments, we obtain the following rigidity theorem. 

\begin{theorem}[Rigidity]
\label{thm:rigidity}
Let $(u,v)$ be a global solution on $[0,\infty)$. Assume that, for some $\omega>0$,
\begin{equation}
 S_{\omega}(u_0,v_0)<\mu_\omega,
 \qquad K(u_0,v_0)\geq0.
\label{rigidity-well}
\end{equation}
Assume also that there exists $x\in C([0,\infty);\R^5)$ such that the set
\begin{equation}
 \left\{(u,v)(t,\cdot+x(t))\semicolon t\geq0\right\}
\label{rigidity-compactness}
\end{equation}
is precompact in $H^1(\R^5)\times H^1(\R^5)$.
Then $(u,v)\equiv(0,0)$. 
\end{theorem}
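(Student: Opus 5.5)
Argue by contradiction: suppose $(u,v)\not\equiv(0,0)$. Then $M(u,v)>0$ by conservation of mass. The remark after Lemma~\ref{lem:sign} shows that $K(u_0,v_0)=0$ forces $(u_0,v_0)=(0,0)$, so we have $K(u_0,v_0)>0$. Lemma~\ref{lem:sign} then gives $K(u(t),v(t))>0$ for all $t$. Since $S_\omega(u_0,v_0)<\mu_\omega$ and $K\geq0$, the kinetic energy $L$ is controlled by $S_\omega$, so the solution is uniformly bounded in $H^1\times H^1$.

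Fix $e$ as in Section~\ref{sec:virial}: $e=P/|P|$ if $P\neq0$, otherwise arbitrary. Corollary~\ref{cor:uniform-Kperp} gives $\delta>0$ with $K_\perp(u(t),v(t))\geq\delta$ for all $t\geq0$.

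The core of the argument compares two bounds on the localized virial $V_{\perp,R}$. First, the upper bound: since $|a_R(z)|\leq 2R$, Cauchy--Schwarz and the uniform $H^1$ bound give $|V_{\perp,R}(t)|\leq CR$ for all $t$. Second, the lower bound on its derivative: by Proposition~\ref{prop:virial}, $\dot V_{\perp,R}\geq 4\delta-|\cE_R(t)|$, and $\cE_R$ is controlled by the mass and energy density outside the ball $\{|x-X(t)|\geq R\}$. Lemma~\ref{lem:tail}, however, controls the tail outside balls centred at $x(t)$, not at $X(t)$. I expect this mismatch to be the main obstacle, and Lemma~\ref{lem:center} is the tool for it.

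Let $\eps>0$ be small, to be fixed later in terms of $\delta$ and $C_0$. Take $R_0$ from Lemma~\ref{lem:tail} so that the tail beyond $R_0$ around $x(t)$ is at most $\eps$, and hence $C_0\times$(tail)$\leq\delta$ once $\eps$ is small. By Lemma~\ref{lem:center} there is $T_0$ such that $|x(t)-X(t)|\leq\eta t$ for $t\geq T_0$, with $\eta>0$ small. Now choose the window: for $T\geq T_0$, work on $[T_0,T]$ and set $R=R_0+\eta T$. For $t\in[T_0,T]$ we have $\{|x-X(t)|\geq R\}\subset\{|x-x(t)|\geq R_0\}$, so $|\cE_R(t)|\leq C_0\eps\leq 2\delta$. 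Hence $\dot V_{\perp,R}\geq 2\delta$ on $[T_0,T]$.

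Integrating over $[T_0,T]$ gives
\[
2\delta(T-T_0)\leq |V_{\perp,R}(T)|+|V_{\perp,R}(T_0)|\leq 2CR=2C(R_0+\eta T).
\]
Choose $\eta<\delta/(2C)$; this is permissible because $C$ depends only on the uniform $H^1$ bound and not on $\eta$, and $T_0=T_0(\eta)$ is fixed afterwards. Letting $T\to\infty$ then produces a contradiction, so $(u,v)\equiv(0,0)$.

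One point needs checking in Proposition~\ref{prop:virial}: the error bound must use a constant $C_0$ that is uniform in $R$ and $t$, including the terms generated by $\dot X(t)=2\Pe(t)/M$. The proposition as stated provides this, with the $M(u,v)^{-1}$ dependence already included. The projected term $\Pi\Pe\cdot P$ has already been absorbed into the identity, since $\Pi P=0$.
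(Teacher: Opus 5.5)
Your proof is correct and follows essentially the same route as the paper. Both arguments get uniform positivity of $K_\perp$ from Corollary~\ref{cor:uniform-Kperp}, apply the localized projected virial centred at $X(t)$, absorb the mismatch between the centres $X(t)$ and $x(t)$ by taking $R=R_0+\eta T$ via Lemma~\ref{lem:center}, and compare growth of order $\delta T$ against $|V_{\perp,R}|\lesssim R_0+\eta T$. The only difference is that you treat $P=0$ inside the same argument (any $e$ works there, since $\Pi P=0$ trivially), whereas the paper defers that case to the earlier zero-momentum rigidity result; yours is slightly more self-contained, provided you take $R_0\geq1$ so that Proposition~\ref{prop:virial} applies.
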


\begin{proof}
Suppose that $(u,v)$ is nonzero. Since $(u_0,v_0)\neq(0,0)$, the
definition of $\mu_\omega$ and \eqref{rigidity-well} imply
\[
 K(u_0,v_0)>0.
\]
Hence, Lemma~\ref{lem:sign} gives $K(u(t),v(t))>0$ for all $t\geq0$.

Let $P=P(u_0,v_0)$. When $P=0$, then the rigidity has already been shown by \cite{MR4360610}. Thus, we only consider the case $P\neq 0$. Set $e=P/|P|$ and use this $e$ to define $\Pi$ and $K_\perp$.
Proposition~\ref{prop:Kperp-positive} and
Corollary~\ref{cor:uniform-Kperp} give $\delta>0$ such that
\begin{equation}
 K_\perp(u(t),v(t))\geq\delta
 \qquad(t\geq0).
\label{rigidity-delta}
\end{equation}

Let $C_0$ be as in Proposition~\ref{prop:virial}. Lemma~\ref{lem:tail} gives $R_0\geq1$ such that
\begin{equation}
 C_0\sup_{t\geq0}
 \int_{|x-x(t)|\geq R_0}
 \left(|\nabla u|^2+|u|^2+|\nabla v|^2+|v|^2+|v||u|^2\right)dx
 \leq \frac{\delta}{2}.
\label{small-tail-rigidity}
\end{equation}

For $T_1>T_0>0$, set
\begin{equation}
 R\coloneqq R_0+\sup_{t\in[T_0,T_1]}|X(t)-x(t)|.
\label{choice-R}
\end{equation}
If $t\in[T_0,T_1]$ and $|x-X(t)|\geq R$, then
\[
 |x-x(t)|\geq |x-X(t)|-|X(t)-x(t)|\geq R_0.
\]
Thus, \eqref{projected-virial-clean}, \eqref{rigidity-delta}, and \eqref{small-tail-rigidity} give
\begin{equation}
 \dot{V}_{\perp,R}(t)\geq \frac{\delta}{2}
 \qquad(t\in[T_0,T_1]).
\label{virial-growth}
\end{equation}

Since $|a_R|\lesssim R$, the Cauchy--Schwarz inequality shows  that
\begin{equation}
 |V_{\perp,R}(t)|\leq C_1R,
\label{virial-bound}
\end{equation}
where $C_1>0$ is independent of $R$ and $t$. 

Choose $\eta>0$ so that $4C_1\eta<\delta$. Lemma~\ref{lem:center} gives $T_0$ such that
\begin{equation}
 |X(t)-x(t)|\leq\eta t
 \qquad(t\geq T_0).
\label{center-eta}
\end{equation}
For any $T_1>T_0$, this gives $R\leq R_0+\eta T_1$.

Integrating~\eqref{virial-growth} over $[T_0,T_1]$ and using \eqref{virial-bound}, we obtain
\begin{equation}
 \frac{\delta}{2}(T_1-T_0)
 \leq V_{\perp,R}(T_1)-V_{\perp,R}(T_0)
 \leq2C_1(R_0+\eta T_1).
\label{final-contradiction}
\end{equation}
Therefore, it follows that
\[
 \left(\frac{\delta}{2}-2C_1\eta\right)T_1
 \leq \frac{\delta}{2} T_0+2C_1R_0.
\]
Since $\delta/2-2C_1\eta>0$, letting $T_1\to\infty$ yields a contradiction.
Hence, we obtain $(u,v)=(0,0)$.
\end{proof}

This rigidity theorem implies the main theorem immediately.

\begin{proof}[Proof of Theorem~\ref{thm:main}]
Suppose that forward scattering fails. Proposition~\ref{prop:critical} gives $\omega>0$ and a nonzero nonscattering global solution $(u^*,v^*)$ such that
\[
 S_{\omega}(u^*_0,v^*_0)<\mu_\omega,
 \qquad K(u^*_0,v^*_0)>0,
\]
and its orbit is precompact modulo translations. This solution satisfies the hypotheses of Theorem~\ref{thm:rigidity} and thus we get a contradiction. Thus forward scattering holds. Since~\eqref{NLS} is invariant under
\[
 (u,v)(t,x)\longmapsto(\overline u,\overline v)(-t,x),
\]
the same argument gives backward scattering. 
\end{proof}

\appendix

\section{Derivation of localized virial identity}
\label{app:virial}

We prove Proposition~\ref{prop:virial} directly. By rotational invariance, it suffices to work in coordinates such that
\begin{equation}
 e=(1,0,0,0,0),
 \qquad
 \Pi=\diag(0,1,1,1,1),
 \qquad \Pi P=0.
\label{app:coordinates}
\end{equation}
Set $z=x-X(t)$ and $r=|z|$. Define
\begin{equation}
 p(u,v)\coloneqq \overline u\nabla u+\frac12\overline v\nabla v,
 \qquad n(u,v)\coloneqq \re(\overline v u^2).
\label{app:Qn}
\end{equation}

\subsection{Localized weighted term with the center $X(t)$}

We write $a_R(z)=\chi_R(r)\Pi z$ and denote its $k$-component by $(a_R)_k$ for $k=1,...,5$. Then
\begin{align}
 \partial_j(a_R)_k
 &=\chi_R(r)\Pi_{kj}
 +\frac{\chi_R'(r)}r z_j(\Pi z)_k,\label{app:Da}\\
 \operatorname{div} a_R(z)
 &=\sum_{j=1}^{5}  \partial_j(a_R)_j=4\chi_R(r)+\frac{\chi_R'(r)}r|\Pi z|^2.\label{app:diva}
\end{align}
Since $\dot{X}=2\Pe/M$, it holds that
\begin{equation}
 \partial_ta_R(z)
 =-\chi_R(r)\Pi \dot{X}
 -\frac{\chi_R'(r)}r(z\cdot \dot{X})\Pi z.
\label{app:at}
\end{equation}
The untruncated part related to the first term vanishes as follows;
\[
 -2(\Pi \dot{X}(t))\cdot P
 =-\frac4M(\Pi\Pe(t))\cdot P=0.
\]
Thus, we obtain
\begin{equation}
2\im\int_{\R^5}\partial_ta_R(z)\cdot p(u,v)dx
 =\cE_{R,1}(t),
\label{app:center-main}
\end{equation}
where
\begin{align}
 \cE_{R,1}
 \coloneqq -\frac4M\Pe(t)\cdot\im\int_{\R^5}
 \bigg[& (\chi_R-1)\Pi p(u,v) +\frac{\chi_R'}r z\bigl((\Pi z)\cdot p(u,v)\bigr)
 \bigg]dx.
\label{app:center-error}
\end{align}

\subsection{Momentum density term}

For a smooth solution $(u,v)$ and each $k=1,\dots,5$, we have
\begin{align}
 \partial_t\im p_k
 ={}&-2\sum_{j=1}^5\partial_j
 \re(\partial_j\overline u\,\partial_ku)
 -\kappa\sum_{j=1}^5\partial_j
 \re(\partial_j\overline v\,\partial_kv)\notag\\
 &+\frac12\partial_k\Delta
 \left(|u|^2+\frac\kappa2|v|^2\right)
 -\frac12\partial_kn(u,v).
\label{app:local-momentum}
\end{align}
We calculate
\[
\cV_R(t) \coloneqq 2\im\int_{\R^5}a_R(z)\cdot \partial_t p(u,v)dx. 
\]
Using \eqref{app:local-momentum} for the derivative of $p$ and integration by parts, we obtain
\begin{align}
 \cV_R(t)= {}&4\sum_{j,k=1}^5\int_{\R^5}
 \partial_j(a_R)_k\re(\partial_j\overline u\,\partial_ku)dx\notag\\
 &+2\kappa\sum_{j,k=1}^5\int_{\R^5}
 \partial_j(a_R)_k\re(\partial_j\overline v\,\partial_kv)dx\notag\\
 &-\int_{\R^5}\Delta d_R(z)
 \left(|u|^2+\frac\kappa2|v|^2\right)dx
 +\int_{\R^5}d_R(z)n(u,v)dx,
\label{app:VR}
\end{align}
where  we set $d_R\coloneqq \operatorname{div}a_R$. 
Equations~\eqref{app:Da} and~\eqref{app:diva} give
\begin{equation}
 \cV_R(t)=4L_\perp(u,v)+4N(u,v)+\cE_{R,2}(t)
 =4K_\perp(u,v)+\cE_{R,2}(t).
\label{app:virial-main}
\end{equation}
Here, the term $\cE_{R,2}$ is given by
\begin{align}
 \cE_{R,2}\coloneqq {}&
 4\sum_{j,k=1}^5\int_{\R^5}
 \left((\chi_R-1)\Pi_{kj}
 +\frac{\chi_R'}r z_j(\Pi z)_k\right)
 \re(\partial_j\overline u\,\partial_ku)dx\notag\\
 &+2\kappa\sum_{j,k=1}^5\int_{\R^5}
 \left((\chi_R-1)\Pi_{kj}
 +\frac{\chi_R'}r z_j(\Pi z)_k\right)
 \re(\partial_j\overline v\,\partial_kv)dx\notag\\
 &-\int_{\R^5}\Delta d_R(z)
 \left(|u|^2+\frac\kappa2|v|^2\right)dx\notag\\
 &+\int_{\R^5}
 \left(4(\chi_R-1)+\frac{\chi_R'}r|\Pi z|^2\right)
 n(u,v)dx.
\label{app:virial-error-explicit}
\end{align}
\subsection{Virial identity}
By \eqref{app:center-main} and \eqref{app:virial-main}, we obtain \eqref{projected-virial-clean}, that is, 
\begin{align*}
	\dot{V}_{\perp,R}(t) &= 2\im\int_{\R^5} \partial_t a_R(z)\cdot p(u,v)dx  + \cV_R(t) \\
	&=4K_\perp(u,v)+\cE_{R}(t),
\end{align*}
where we set $\cE_R\coloneqq \cE_{R,1}+\cE_{R,2}$.
For fixed $\chi$, it holds that
\begin{align}
 &\left|(\chi_R-1)\Pi_{kj}
 +\frac{\chi_R'}r z_j(\Pi z)_k\right|
 \lesssim\1_{\{r\geq R\}},\label{app:weight-bound1}\\
 &\left|4(\chi_R-1)+\frac{\chi_R'}r|\Pi z|^2\right|
 \lesssim\1_{\{r\geq R\}},\label{app:weight-bound2}\\
 &|\Delta d_R(z)|
 \lesssim R^{-2}\1_{\{R\leq r\leq2R\}}.
\label{app:weight-bound3}
\end{align}
By these estimates and the uniform boundedness of $|\Pe(t)|$, the error $\cE_R = \cE_{R,1}+\cE_{R,2}$ satisfies \eqref{virial-error}.

%%%%%%%%%%%%%%%%%%%%%%%%%%%%%%%%%%%%%%%%%%%%%%%%%%%%%%%%%

%===========================================
\section{Characterization by $K_{\perp}$}
\label{app:B}
%===========================================

Our functional $K_{\perp}$ characterizes the minimizing problem $\mu_{\omega}$ as follows. 

\begin{proposition}
\label{prop:B.1}
For $\omega>0$ and direction $e \in \mathbb{S}^{4}$, we set
\begin{equation*}
	\mu_{e}(\omega) \coloneqq \inf\{S_{\omega}(f,g) \semicolon (f,g)\in H^1(\mathbb{R}^5)\times H^1(\mathbb{R}^5)\setminus\{(0,0)\}, K_{\perp} (f,g)=0\}, 
\end{equation*}
where $K_{\perp}$ is a functional related to the direction $e^{\perp}$. 
Then it holds that
\begin{equation*}
	\mu_e(\omega) = \mu_{\omega}
\end{equation*}
and, in particular, $\mu_{e}(\omega)$ is independent of $e$. 
\end{proposition}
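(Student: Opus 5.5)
The plan is to prove the two inequalities $\mu_e(\omega)\ge\mu_\omega$ and $\mu_e(\omega)\le\mu_\omega$ separately. By rotational invariance we may take $e=(1,0,0,0,0)$ and use the transverse scaling \eqref{transverse-scaling} from the proof of Proposition~\ref{prop:Kperp-positive}, together with the formulas \eqref{I-scaling} and \eqref{K-scaling}.

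\emph{Lower bound $\mu_e(\omega)\ge\mu_\omega$.} Let $(f,g)\neq(0,0)$ satisfy $K_\perp(f,g)=0$.
\begin{itemize}
\item Since $L_\perp>0$, we get $N=-L_\perp<0$.
\item By \eqref{I-scaling}, $S_\omega(f^\lambda,g^\lambda)=\frac12L_\parallel+\frac{\lambda^2}{2}K_\perp+\frac\omega2 M=S_\omega(f,g)$ for every $\lambda>0$. So the action is constant along the scaling.
\item By \eqref{K-scaling}, $K(f^\lambda,g^\lambda)=L_\parallel+\frac{\lambda^2}{4}N$. This is strictly decreasing in $\lambda$ and tends to $-\infty$ as $\lambda\to\infty$.
\item As $\lambda\to0$ it tends to $L_\parallel$, and $L_\parallel>0$: a nonzero $H^1$ function cannot have $\partial_{x_1}f=\partial_{x_1}g=0$, since it would then be constant in $x_1$ and hence not in $L^2$.
\item Hence there is $\lambda_0>0$ with $K(f^{\lambda_0},g^{\lambda_0})=0$, and $(f^{\lambda_0},g^{\lambda_0})\neq(0,0)$.
\item Therefore $S_\omega(f,g)=S_\omega(f^{\lambda_0},g^{\lambda_0})\ge\mu_\omega$. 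Taking the infimum gives the claim.
\end{itemize}

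\emph{Upper bound $\mu_e(\omega)\le\mu_\omega$.} It suffices to exhibit one admissible competitor with action $\mu_\omega$, and I would use the rescaled ground state $(\phi_\omega,\psi_\omega)$ from \eqref{ground-action}. It satisfies $K(\phi_\omega,\psi_\omega)=0$, i.e. $L=-\frac54N$. The key claim is that the kinetic energy is equidistributed among the coordinate directions:
\[
\norm{\partial_j\phi_\omega}_{L^2}^2+\frac\kappa2\norm{\partial_j\psi_\omega}_{L^2}^2=\frac15L(\phi_\omega,\psi_\omega),\qquad j=1,\dots,5.
\]
Granting this, $L_\perp=\frac45L=-N$, so $K_\perp(\phi_\omega,\psi_\omega)=0$. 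The pair is nonzero, hence admissible, and so $\mu_e(\omega)\le S_\omega(\phi_\omega,\psi_\omega)=\mu_\omega$.

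\emph{Main obstacle: justifying the equidistribution.} This is the only step requiring care, and there are two routes.
\begin{itemize}
\item If the ground state constructed in \cite{MR3082479} is radial (obtained via symmetric rearrangement), equidistribution is immediate by symmetry.
\item To avoid relying on radiality, I would instead derive directional Pohozaev identities for \eqref{stationary}. Multiply the first equation by $x_j\partial_j\phi$ and the second by $\frac12x_j\partial_j\psi$ (no summation over $j$), add, and integrate by parts. The multiplier $\frac12$ is needed so that the cross terms $\phi\psi\,x_j\partial_j\phi$ and $\frac12\phi^2x_j\partial_j\psi$ combine into the derivative $\frac12x_j\partial_j(\phi^2\psi)$.
\item The result expresses $\norm{\partial_j\phi}_{L^2}^2+\frac\kappa2\norm{\partial_j\psi}_{L^2}^2$ as a combination of $\sum_{k}\bigl(\norm{\partial_k\phi}_{L^2}^2+\frac\kappa2\norm{\partial_k\psi}_{L^2}^2\bigr)$, $N$ and $M$ whose coefficients do not depend on $j$. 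Hence the directional kinetic terms are all equal.
\item The multipliers $x_j\partial_j\phi$ need not lie in $L^2$ a priori. I would handle this by the standard cutoff approximation, using the exponential decay and regularity of $H^1$ solutions of \eqref{stationary}.
\end{itemize}
Either route gives $L_\parallel=\frac15L$, which completes the proof, and the independence of $e$ then follows since $\mu_\omega$ does not involve $e$.
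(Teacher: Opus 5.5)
Your proposal is correct and follows the paper's proof. The transverse scaling, which leaves $S_\omega$ unchanged when $K_\perp=0$ and moves $K$ from $L_\parallel>0$ down to $-\infty$, gives $\mu_e(\omega)\ge\mu_\omega$, and using $(\phi_\omega,\psi_\omega)$ as a competitor gives the reverse inequality. The one addition is that you actually justify $K_\perp(\phi_\omega,\psi_\omega)=0$, which the paper only asserts from the stationary equation: your directional Pohozaev identity correctly gives $\|\partial_j\phi_\omega\|_{L^2}^2+\frac{\kappa}{2}\|\partial_j\psi_\omega\|_{L^2}^2=\frac{\omega}{2}M+\frac12L+\frac12N$ for every $j$, hence $L_\perp=\frac45L=-N$.
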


\begin{proof}
First, we show $\mu_{\omega} \geq \mu_{e}(\omega)$. 
Take a ground state $(\phi_\omega, \psi_\omega)$ of $\mu_{\omega}$. Then $K_{\perp}(\phi_\omega, \psi_\omega) =0$ since $(\phi_\omega, \psi_\omega)$ is the solution to \eqref{stationary}. Thus, by the definition of  $\mu_{e}(\omega)$, 
\begin{equation*}
	S_{\omega}(\phi_\omega, \psi_\omega) \geq \mu_{e}(\omega)
\end{equation*}
Therefore, it holds that $\mu_{\omega} \geq \mu_{e}(\omega)$. 

Next, we show $\mu_{\omega} \leq \mu_{e}(\omega)$ based on Proposition \ref{prop:Kperp-positive}. 
By rotational invariance, we may assume that $e=(1,0,0,0,0)$. Take $(f,g)\neq (0,0)$ such that $K_{\perp}(f,g)=0$. Since $K(f^{\lambda},g^{\lambda}) \to L_{\parallel}(f,g)>0$ as $\lambda \to +0$ and $K(f^{\lambda},g^{\lambda}) \to -\infty$ as $\lambda \to \infty$, there exists $\lambda_0 \in (0,\infty)$ such that $K(f^{\lambda_0},g^{\lambda_0}) =0$. This and $K_{\perp}(f,g)=0$ imply
\begin{equation*}
	\mu_{\omega} \leq S_{\omega}(f^{\lambda_0},g^{\lambda_0}) = S_{\omega}(f,g). 
\end{equation*} 
Thus, we obtain $\mu_{\omega} \leq \mu_{e}(\omega)$. This completes the proof.
\end{proof}

%%%%%%%%%%%%%%%%%%%%%%%%%%%%%%%%%%%%%%%%%%%%%%%%%%%%%%%%%
\begin{dataavailability}
There is no available data related to this work. 
\end{dataavailability}

\begin{conflict}
The authors declare that there are no conflicts of interest. 
\end{conflict}

\begin{aideclaration}
The authors used ChatGPT in order to prepare this manuscript. After using this tool, the authors reviewed and edited the content. They take full responsibility for the content of this manuscript.
\end{aideclaration}

\begin{acknowledgement}
The first author is supported by KAKENHI Grant-in-Aid for Early-Career Scientists No. JP24K16947 and partially by KAKENHI Grant-in-Aid for Scientific Research (B) No. JP26K00612. 
\end{acknowledgement}

%%%%%%%%%%%%%%%%%%%%%%%%%%%%%%%%%%%%%%%%%%%%%%%%%%%%%%%%%

\end{document}